\documentclass[11pt]{amsart}

\usepackage[margin=1.15in]{geometry}
\usepackage{amscd,amssymb, amsmath, setspace, wasysym}
\usepackage{graphicx}
\usepackage[all]{xy}
% \linespread{1.1}

\usepackage{url}
\usepackage{hyperref}%make all the references and links clickable

\setlength{\parindent}{22pt}
\setlength{\parskip}{2ex}

% to make my stuff easier to write, i use \begin{thm} \end{thm} and here i define the shortcuts:
\theoremstyle{plain}
\newtheorem{theorem}{Theorem}[section]
\newtheorem{prop}[theorem]{Proposition}

\newtheorem{lemma}[theorem]{Lemma}

% to make my stuff easier to write, i use \begin{defn} \end{defn} and here i define the shortcuts:
\theoremstyle{definition}

\newtheorem*{ex*}{Example}

\newcommand\sO{{\mathcal O}}
\newcommand\sK{{\mathcal K}}
\newcommand\sH{{\mathcal H}}

\newcommand\sF{{\mathcal F}}
\newcommand\sG{{\mathcal G}}
\newcommand\sE{{\mathcal E}}
\newcommand\sI{{\mathcal I}}

\newcommand\sJ{{\mathcal J}}

\newcommand{\ddim}{{\rm dim}\,}
\newcommand{\pic}[1]{{\rm Pic}^0(#1)}

\newcommand\rp{{\mathbf{P}}}

\title[Regularity of curves in abelian varieties]
{Regularity of curves in abelian varieties}

\author{Luigi Lombardi and Wenbo Niu}
\address{Department of Mathematics, Statistics, and Computer Science\\ University of Illinois at Chicago, 851 S. Morgan Street, Chicago, IL, 60607}
 \email{\url{lombardi@math.uic.edu}}

\address{Department of Mathematics\\ Purdue University, 150, N. University Street, West Lafayette, IN, 47907}
\email{\url{niu6@math.purude.edu}}

\begin{document}
\begin{abstract}
Inspired by a theorem of Gruson-Lazarsfeld-Peskine 
bounding the Castelnuovo-Mumford regularity of curves in projective spaces, we bound the Theta-regularity
of curves in polarized abelian varieties.
\end{abstract}
\maketitle
\section{Introduction}
% The Castelnuovo-Mumford regularity is a cohomological tool used to compute the ``complexity'' of a sheaf on a projective space. 
A coherent sheaf $\sF$ on a projective space $\rp^n$ is \emph{Castelnuovo-Mumford} $k$\emph{-regular} if for all $i>0$ the spaces 
$H^i(\rp^n,\sF(k-i))=0$. One can read off some geometric properties of a subvariety $Y\subset \rp^n$ just by looking at the regularity of 
its ideal sheaf $\sI_Y$. For instance, if $\sI_Y$ is Castelnuovo-Mumford $k$-regular, then 
$Y$ is cut-out by hypersurfaces of degree $k$ (\emph{cf}. \cite{La} Theorem 1.8.3). 
In this direction, a theorem of Gruson-Lazarsfeld-Peskine (\emph{cf}. \cite{GLP} Theorem 1.1), answering
and extending a classical question of Castelnuovo, turns out to be very useful: 
if $C$ is a reduced, irreducible, non-degenerate curve of degree $d$ in $\rp^n$, then $\sI_C$ is Castelnuovo-Mumford $(d+2-n)$-regular.

In analogy to Castelnuovo-Mumford regularity, Pareschi and Popa
introduced a notion of regularity for sheaves on a polarized abelian variety $(X,\Theta)$, the so called
$\Theta$-\emph{regularity} (\emph{cf}. \cite{PP1} Definition 6.1). It is defined as follows.
Given a coherent sheaf $\sF$ on $X$,
we denote by
\begin{equation}\label{V}
V^i(\sF):=\{\alpha\in{\rm Pic}^0(X)\,|\,h^i(X,\sF\otimes \alpha)>0\}
\end{equation}
the \emph{cohomological support loci of} $\sF$. Then we 
say that $\sF$ is \emph{Mukai-regular} (or \emph{M-regular} for short) if ${\rm codim}_{\pic{X}}\, V^i(\sF)>i$ for all $i>0$ and that
% $$\sF\quad \mbox{ is }\quad Mukai\mbox{-regular (or }M\mbox{-regular} \quad \stackrel{{\rm def}}{\Longleftrightarrow}
% \quad {\rm codim}_{\pic{X}}\, V^i(\sF)>i \quad \mbox{ for all }\quad i>0$$
$$\sF\quad \mbox{ is } \quad k\mbox{-}\Theta\emph{-regular}\quad \stackrel{{\rm def}}{\Longleftrightarrow}
\quad \sF\otimes \Theta^{\otimes (k-1)}\quad \mbox{ is } \quad M\mbox{-regular}.$$
The systematic study of Pareschi and Popa on $\Theta$-regularity shows that $\Theta$-regular sheaves
enjoy analogous properties to Castelnuovo-Mumford regular sheaves on projective spaces (\emph{cf}. \cite{PP1} Theorem 6.3).
In particular, if $\sI_C$ is $k$-$\Theta$-regular, then $C$ is cut-out by $k$-$\Theta$-equations.

The aim of these notes is to provide a bound for the $\Theta$-regularity of curves in a polarized abelian variety.
\begin{theorem}\label{intr-thm}
Let $X$ be a complex abelian variety of dimension $n$ and $\Theta$ be an ample and globally generated line bundle on $X$. 
Let $\iota:C\hookrightarrow X$
be a reduced and irreducible curve and let $\nu:\widetilde{C}\longrightarrow C$ be its normalization. Set $f:=\iota\circ \nu$
and $d:={\rm deg}\, f^*\Theta$.
Then the ideal sheaf 
$$\sI_C\quad \mbox{ is }\quad \big(n+d+1\big)\mbox{-}\Theta\mbox{-regular}.$$
\end{theorem}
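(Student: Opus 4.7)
The plan is to adapt the Gruson-Lazarsfeld-Peskine strategy, replacing Castelnuovo-Mumford regularity on $\mathbf{P}^n$ with Pareschi-Popa M-regularity on $X$. Unwinding the definition, the statement is equivalent to $\sI_C \otimes \Theta^{\otimes(n+d)}$ being M-regular, i.e.\ $\codim_{\pic{X}} V^i(\sI_C \otimes \Theta^{\otimes(n+d)}) > i$ for every $i \ge 1$. First I would tensor the structure sequence $0 \to \sI_C \to \sO_X \to \sO_C \to 0$ by $\Theta^{\otimes(n+d)} \otimes \alpha$ and use Kodaira vanishing for the ample bundle $\Theta^{\otimes(n+d)} \otimes \alpha$ to deduce that $V^i(\sI_C \otimes \Theta^{\otimes(n+d)}) = V^{i-1}(\sO_C \otimes \Theta^{\otimes(n+d)})$ for $i \ge 2$, while $V^1(\sI_C \otimes \Theta^{\otimes(n+d)})$ coincides with the non-surjectivity locus of the restriction map $H^0(X, \Theta^{\otimes(n+d)} \otimes \alpha) \to H^0(C, (\Theta^{\otimes(n+d)} \otimes \alpha)|_C)$. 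Since $C$ is one-dimensional, only $V^1$ of $\sO_C$-twists is nontrivial, so the problem reduces to showing $\codim V^1(\sO_C \otimes \Theta^{\otimes(n+d)}) \ge 3$ together with a codimension $\ge 2$ bound on the restriction-failure locus.

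In analogy with the GLP use of $\Omega_{\mathbf{P}^n}(1)$, the key vector bundle should be the kernel of the evaluation map
$$0 \longrightarrow M_\Theta \longrightarrow H^0(X, \Theta) \otimes \sO_X \longrightarrow \Theta \longrightarrow 0,$$
which is locally free because $\Theta$ is globally generated. Pulling back via $f = \iota \circ \nu$ and running the associated Koszul/Eagon-Northcott complex expresses powers $f^*\Theta^{\otimes m}$ in terms of the exterior powers $\wedge^a f^*M_\Theta$. I would use this to relate the cohomological support loci of $\sI_C \otimes \Theta^{\otimes(n+d)}$ to cohomology of $\wedge^a M_\Theta \otimes \Theta^{\otimes b}$ on $X$ and of $\wedge^a f^*M_\Theta \otimes f^*\Theta^{\otimes b} \otimes f^*\alpha$ on $\widetilde{C}$, with $a + b$ in the range dictated by $n+d$. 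On the ambient side, vanishings for these twists should follow from M-regularity of $\Theta^{\otimes k}$ for $k \ge 1$ combined with the Koszul resolution; on the curve side only $H^0$ and $H^1$ can occur, and Serre duality converts $H^1$ into sections of a canonical twist, controlled by Brill-Noether-type loci pulled back from $\mathrm{Jac}(\widetilde{C})$ to $\pic{X}$ along $f^*$.

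The main obstacle is to pin down the precise shift $n+d+1$. The degree $d$ should enter through $\deg(f^*\Theta^{\otimes(n+d)} \otimes f^*\alpha) = (n+d)d$ on $\widetilde{C}$, which controls $H^1$ vanishing in the relevant range, while the dimension $n$ enters through the length of the truncated Koszul complex of $M_\Theta$. The subtle point is that the genus $g(\widetilde{C})$ should \emph{not} appear in the final bound: any high-genus contribution must be absorbed into codimension estimates inside the $n$-dimensional $\pic{X}$, rather than by a brute vanishing on $\widetilde{C}$. Achieving the exact constant $n+d+1$ will require careful numerical bookkeeping in the Koszul resolution together with the codimension of $f^*\pic{X}$ inside $\mathrm{Jac}(\widetilde{C})$, and it is precisely at this interplay between the global Koszul vanishings on $X$ and Brill-Noether-type estimates on $\widetilde{C}$ that I expect the argument to be most delicate.
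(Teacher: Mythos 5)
Your reduction is fine as far as it goes: $(n+d+1)$-$\Theta$-regularity is indeed M-regularity of $\sI_C\otimes\Theta^{\otimes(n+d)}$, the structure sequence plus Kodaira vanishing kills $V^i$ for $i\geq 3$, and the remaining work is concentrated in $V^2$ (an $H^1$ on the curve) and $V^1$ (a surjectivity-of-restriction locus). But at exactly this point your sketch stops where the real difficulty begins, and the mechanism you propose for finishing is not the one that works. Bounding $\codim_{\pic{X}}V^1(\Theta^{\otimes(n+d)}|_C\otimes\alpha|_C)$ and the restriction-failure locus requires a genericity statement, and the only source of genericity available is Kleiman transversality applied to translates of Brill--Noether loci in ${\rm Pic}(\widetilde{C})$ --- which you cannot invoke for the \emph{fixed} line bundle $\Theta^{\otimes(n+d)}|_{\widetilde C}$. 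The paper's key missing ingredient is an auxiliary \emph{general} line bundle $B$ of degree $g$ on the normalization $\widetilde{C}$: Kleiman's theorem gives $\dim V^1(f_*B)\leq n+g-b-2=n-2$ for general $B$ of degree $b=g$, and a pencil trick with two general sections of $\Theta\otimes\alpha$ (splitting $\iota^*M_{\Theta\otimes\alpha}$ up to a trivial factor) transports this bound to the $V^1$ you need. Without some such device your ``Brill--Noether-type estimates on $\widetilde{C}$'' have no reason to intersect $f^*\pic{X}$ properly, since $g$ can be much larger than $n$ and $d$.

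The second structural gap is in how the Eagon--Northcott complex is set up and where $d$ enters the bound. You propose running Koszul/Eagon--Northcott on $M_\Theta$ itself and extracting $d$ from $\deg f^*\Theta^{\otimes(n+d)}=(n+d)d$; in the actual argument $d$ enters as $h^0(\widetilde{C},B\otimes f^*\Theta)=d+1$ (Riemann--Roch, using $\deg B=g$ to kill $h^1$), which is the rank of the target of the presentation $\bigoplus_i W_i\otimes\alpha_i^{-1}\otimes\Theta^{-1}\to H^0(\widetilde{C},A)\otimes\sO_X\to\sG\to 0$ obtained from \emph{continuous global generation} of $q_*(\sI_\Gamma\otimes p^*(B\otimes f^*\Theta))\otimes\Theta$ (here $\Gamma$ is the graph of $f$). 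The Eagon--Northcott complex of that presentation resolves, away from $C$, the $0$-th Fitting ideal (which agrees with $\sI_C$ off the singular points), and its terms are direct sums of $\Theta^{\otimes(-h-i)}$ twisted by degree-zero line bundles; the shift $h+n-1=n+d$ is then forced by elementary vanishing for powers of $\Theta$. So the constant $n+d+1$ is not the output of ``careful numerical bookkeeping in the Koszul resolution of $M_\Theta$'' but of this specific presentation. In short: your outline identifies the right circle of ideas (GLP strategy, $M_\Theta$, Eagon--Northcott, Brill--Noether loci) but omits the two steps --- the general degree-$g$ twist $B$ and the continuous-global-generation presentation over the graph --- without which neither the codimension estimates nor the stated constant can be obtained.
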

Since the square of an ample line bundle on an abelian variety is globally generated, we obtain the bound $(2n+4d+1)$ in
case the polarization $\Theta$ is not globally generated. 
As in the bound of Gruson-Lazarsfeld-Peskine, we note that our bound is linear and depends only on the dimension of the 
ambient space and on the degree of the curve.
On the other hand it is not sharp, as previous computations of $\Theta$-regularity
show that both Abel-Jacobi and Abel-Prym curves are $3$-$\Theta$-regular (\cite{PP1} Theorem 4.1, \cite{PP2} Theorem 7.17 and
\cite{CMLV} Corollary B). 
We point out that other bounds for $\Theta$-regularity have been worked out in \cite{PP1} Theorem 6.5
for subvarieties of a polarized abelian variety $(X,\Theta)$ defined by $d$-$\Theta$-equations.
% Finally, we apply Theorem \ref{intr-thm} to bound the $\Theta$-regularity of Brill-Noether curves 
% introduced in \cite{Or}. This is done in the last paragraph where we recall their definition and one of their properties.

The proof of Theorem \ref{intr-thm} goes as follows. As in \cite{GLP} Theorem 1.1, we use Eagon-Northcott complexes
to resolve $\sI_C$ with a complex of locally free sheaves such that: 1) it is exact away from $C$; 2) the Theta-regularity of its terms is 
easily computable. 
However, the methods to establish this resolution differ considerably from the ones used by Gruson-Lazarsfeld-Peskine as they mainly rely
on the generic vanishing theory developed in \cite{PP1} and \cite{PP2}.

\section{Setting and Proof}
Throughout the paper we work in the following setting. Let $X$ be a complex abelian variety of dimension $n$ and  $\Theta$ 
be an ample and globally generated line bundle on $X$. 
Let $\iota:C\hookrightarrow X$ be a reduced and irreducible curve of geometric genus $g$ and $\nu:\widetilde{C}\longrightarrow C$ be
its normalization. Set $f=\iota \circ \nu$ and $d:=\deg f^*\Theta$.
Let $\Gamma \subset \widetilde{C}\times X$ be the graph of $f$ and $p$ and $q$ be the projections from $\widetilde{C}\times X$ 
onto the first and second factor 
respectively. Finally we denote by $\sI_{C}$ and $\sI_{\Gamma}$ the ideal sheaves of $C$ and $\Gamma$ respectively.

We recall that a coherent sheaf $\sF$ on $X$ is \emph{continuously globally generated} if
there exists a positive integer $N$ such that for any general $\alpha_1,\ldots ,\alpha_N\in \pic{X}$ the sum of the twisted evaluation maps
$$\bigoplus_{i=1}^N H^0(X,\sF\otimes \alpha_i)\otimes \alpha_i^{-1}\longrightarrow \sF$$ is surjective. 
For instance, ample line bundles and, in general, $M$-regular sheaves are continuously globally generated (\emph{cf}. \cite{PP1} Proposition 2.13).

In order to compute the $\Theta$-regularity of $\sI_C$, we will show that 
it is enough to check the continuous global generation of sheaves of type
$q_*(\sI_{\Gamma}\otimes p^*A)\otimes \Theta$ where $A$ is a globally generated
line bundle on $\widetilde{C}$ (\emph{cf}. in Proposition \ref{en}). 
To begin with we present a simple lemma. 
% This will be used to prove Proposition \ref{en}. 

\begin{lemma}\label{lem}
Let
$$\sE^{\bullet}: \cdots \longrightarrow \sE_2\stackrel{d_2}{\longrightarrow} \sE_1\stackrel{d_1}{\longrightarrow} \sE_0
\stackrel{d_0}{\longrightarrow} \sJ\longrightarrow 0$$
be a complex of coherent sheaves on a smooth projective irregular variety $Y$ such that $d_0$ is a surjective morphism.
If $\sE^{\bullet}$ is exact away from an algebraic set of dimension $\leq 1$, then we have an inclusion of 
cohomological support loci (\emph{cf}. \eqref{V})
$$V^i(\sJ)\subset V^i(\sE_0)\cup V^{i+1}(\sE_1)\cup\ldots\cup V^{\ddim Y}(\sE_{\ddim Y-i})\quad \mbox{ for any } \quad i\geq  1.$$
\end{lemma}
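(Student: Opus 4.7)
The plan is to decompose $\sE^{\bullet}$ into short exact sequences of syzygy sheaves and chase the resulting long exact sequences in cohomology, using the fact that the homology sheaves $\mathcal{H}_k$ of $\sE^{\bullet}$ have support of dimension at most one to cut off high-degree contributions. Concretely, setting $\mathcal{Z}_k := \ker(d_k)$ and $\mathcal{B}_k := \mathrm{im}(d_{k+1})$, I would record the short exact sequences
\begin{equation*}
0 \to \mathcal{Z}_0 \to \sE_0 \to \sJ \to 0, \qquad 0 \to \mathcal{Z}_{k+1} \to \sE_{k+1} \to \mathcal{B}_k \to 0,
\end{equation*}
the first of which requires the surjectivity of $d_0$, together with $0 \to \mathcal{B}_k \to \mathcal{Z}_k \to \mathcal{H}_k \to 0$. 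Since $\dim \mathrm{Supp}(\mathcal{H}_k) \leq 1$, Grothendieck vanishing yields $h^j(\mathcal{H}_k \otimes \alpha) = 0$ for every $j \geq 2$ and every $\alpha \in \pic{Y}$, which is the key cohomological input.

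Next I would prove by induction on $k \geq 0$ the following intermediate claim: for any $\alpha \in V^i(\sJ)$ with $i \geq 1$, either $\alpha \in V^{i+j}(\sE_j)$ for some $0 \leq j \leq k$, or else $h^{i+k+1}(\mathcal{Z}_k \otimes \alpha) > 0$. The base case $k=0$ is immediate from the long exact sequence of $0 \to \mathcal{Z}_0 \to \sE_0 \to \sJ \to 0$. For the inductive step, the standing alternative $h^{i+k}(\mathcal{Z}_{k-1} \otimes \alpha) > 0$, combined with the long exact sequence of $0 \to \mathcal{B}_{k-1} \to \mathcal{Z}_{k-1} \to \mathcal{H}_{k-1} \to 0$, forces $h^{i+k}(\mathcal{B}_{k-1} \otimes \alpha) > 0$, since $h^{i+k}(\mathcal{H}_{k-1} \otimes \alpha) = 0$ because $i + k \geq 2$. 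Feeding this into the long exact sequence of $0 \to \mathcal{Z}_k \to \sE_k \to \mathcal{B}_{k-1} \to 0$ then yields either $\alpha \in V^{i+k}(\sE_k)$ or $h^{i+k+1}(\mathcal{Z}_k \otimes \alpha) > 0$, closing the induction.

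Finally, applying the claim at $k = \ddim Y - i$, the residual alternative would demand $h^{\ddim Y + 1}(\mathcal{Z}_{\ddim Y - i} \otimes \alpha) > 0$, which is impossible by Grothendieck vanishing on $Y$. Hence $\alpha$ must lie in $V^{i+j}(\sE_j)$ for some $0 \leq j \leq \ddim Y - i$, which is exactly the advertised inclusion. I do not foresee a genuine obstacle here: the argument is a bookkeeping exercise with long exact sequences, and the only real subtlety is recognizing that the dimension-$\leq 1$ support hypothesis produces cohomological vanishing precisely in the degrees $\geq 2$ needed to make the induction run starting from $i \geq 1$.
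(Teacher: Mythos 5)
Your proposal is correct and is essentially the paper's own argument: the same decomposition into short exact sequences via kernels, images, and homology sheaves, the same use of dimension-$\leq 1$ supports to kill $H^j(\sH_k\otimes\alpha)$ for $j\geq 2$, and the same termination by Grothendieck vanishing on $Y$. The only difference is presentational — you run an explicit induction with an alternative, while the paper writes the same steps as a chain of inclusions of cohomological support loci and iterates.
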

\begin{proof}
We set $\sK_i:={\rm ker}\,d_i$, $\sI_i:={\rm im}\, d_i$ and 
$\sH_i:=\sK_i/\sI_{i+1}$ for $i\geq 0$.
By assumption $\ddim {\rm Supp}\, \sH_i\leq 1$ for any $i\geq 0$. Therefore
$$V^j(\sH_i)=\emptyset \quad \mbox{ for any }\quad i\geq 0\quad \mbox{ and } \quad j>1.$$
By looking at the exact sequence $0\longrightarrow \sK_0\longrightarrow \sE_0\longrightarrow \sJ\longrightarrow 0$,
we have $$V^i(\sJ)\subset V^i(\sE_0)\cup V^{i+1}(\sK_0).$$ In addition,
the exact sequence $0\longrightarrow \sI_1\longrightarrow \sK_0\longrightarrow \sH_0\longrightarrow 0$ yields
$$V^{i+1}(\sK_0)\subset V^{i+1}(\sI_1)\cup V^{i+1}(\sH_0)=V^{i+1}(\sI_1).$$
Finally, by looking at the exact sequence $0\longrightarrow \sK_1\longrightarrow \sE_1\longrightarrow \sI_1\longrightarrow 0$,
we deduce that $$V^{i+1}(\sI_1)\subset V^{i+1}(\sE_1)\cup V^{i+2}(\sK_1)$$ and therefore that
$$V^i(\sJ)\subset V^i(\sE_0)\cup V^{i+1}(\sE_1)\cup V^{i+2}(\sK_1).$$
At this point it is enough to iterate the previous argument to obtain the lemma.
\end{proof}

\begin{prop}\label{en}
Let $A$ be a globally generated line bundle on $\widetilde{C}$.
If $q_*(\sI_{\Gamma}\otimes p^*A)\otimes \Theta$ is continuously globally generated, then 
$$\sI_C\quad \mbox{ is }\quad \big(h^0(\widetilde{C},A)+n\big)\mbox{-}\Theta\mbox{-}regular.$$
\end{prop}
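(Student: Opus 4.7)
Set $r := h^0(\widetilde C, A)$. The goal is to show that $\sF := \sI_C \otimes \Theta^{r+n-1}$ is $M$-regular on $X$; equivalently, $\codim V^i(\sF) > i$ for every $i>0$. Following the spirit of \cite{GLP}, the plan is to construct a locally free complex $\sE_\bullet \to \sF \to 0$ on $X$ that is exact off the $1$-dimensional locus $C$ and whose terms have controlled cohomological support loci, so that Lemma \ref{lem} delivers the required codimension bound directly.

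The first step is to translate the CGG hypothesis on $\sM := q_*(\sI_\Gamma \otimes p^*A) \otimes \Theta$ into a system of generators on $\widetilde C \times X$. For general $\alpha_1,\ldots,\alpha_N \in \pic{X}$ the CGG property gives a surjection
$$\bigoplus_{i=1}^N H^0(X,\sM \otimes \alpha_i) \otimes \alpha_i^{-1} \twoheadrightarrow \sM,$$
and by the projection formula each $H^0(X,\sM\otimes \alpha_i)$ equals $H^0(\widetilde C \times X, \sI_\Gamma \otimes p^*A \otimes q^*(\Theta \otimes \alpha_i))$. Combining these with the $r$ generating sections of $A$ coming from $H^0(\widetilde C, A) \otimes \sO_{\widetilde C} \twoheadrightarrow A$, I would assemble them into a morphism
$$\Phi \;:\; \bigoplus_i q^*(\Theta\otimes \alpha_i)^{-1} \otimes p^*A^{-1} \;\longrightarrow\; \sI_\Gamma \;\subset\; \sO_{\widetilde C\times X}$$
whose image generates $\sI_\Gamma$ generically.

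Next I would exploit the fact that $\Gamma \hookrightarrow \widetilde C \times X$ is a regular embedding of codimension $n$ to form the Eagon-Northcott type complex associated to $\Phi$: a locally free complex on $\widetilde C \times X$, exact off $\Gamma$, whose $j$-th term is a direct sum of line bundles of the form $q^*\Theta^{-k}\otimes q^*\alpha \otimes p^*A^{-\ell}$ for suitable $k,\ell$ depending on $j$. After twisting by $q^*\Theta^{r+n-1}$ and applying $Rq_*$---using the projection formula together with $Rq_* p^*A^{-\ell} = H^\bullet(\widetilde C, A^{-\ell}) \otimes \sO_X$---I would obtain a complex $\sE_\bullet$ on $X$ resolving $q_*(\sI_\Gamma \otimes q^*\Theta^{r+n-1}) = \sI_C \otimes \Theta^{r+n-1} = \sF$ off $C$, with terms that are direct sums of line bundles of the form $\Theta^{r+n-1-k}\otimes \alpha$ (carrying vector-space multiplicities coming from $H^\bullet(\widetilde C, A^{-\ell})$). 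In the range of $k$ relevant to Lemma \ref{lem}, the bundles $\Theta^{r+n-1-k}$ are ample, so Kodaira vanishing on the abelian variety $X$ forces $V^i = \emptyset$ for $i\geq 1$ on each term, and Lemma \ref{lem} immediately yields the desired vanishing (a fortiori codimension bound) for $V^i(\sF)$.

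The main obstacle is twofold. First, one must verify that $\Phi$ truly generates $\sI_\Gamma$ off a set of codimension $>n$ in $\widetilde C \times X$, so that the Eagon-Northcott complex is honestly a resolution off $\Gamma$; this is where the CGG hypothesis is used essentially, together with the regularity of the embedding $\Gamma \subset \widetilde C \times X$. Second, one has to show that $Rq_*$ of the Eagon-Northcott complex is still an honest chain complex resolving $\sF$ off $C$---a spectral sequence computation that is tractable thanks to the explicit Künneth-type formula for $Rq_*p^*A^{-\ell}$, but that requires careful bookkeeping since the cohomology of the negative line bundles $A^{-\ell}$ is concentrated in degree $1$.
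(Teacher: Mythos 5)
Your overall strategy (Eagon--Northcott plus Lemma \ref{lem}) is the right one, but you run it on $\widetilde{C}\times X$ rather than on $X$, and the step on which your whole construction rests has a genuine gap. Continuous global generation of $\sM:=q_*(\sI_{\Gamma}\otimes p^*A)\otimes \Theta$ is a statement about the \emph{pushforward}: it gives a surjection $\bigoplus_i H^0(X,\sM\otimes\alpha_i)\otimes\alpha_i^{-1}\twoheadrightarrow \sM$ on $X$. It does not follow that the corresponding sections of $\sI_{\Gamma}\otimes p^*A\otimes q^*(\Theta\otimes\alpha_i)$ generate $\sI_{\Gamma}\otimes p^*A\otimes q^*(\Theta\otimes\alpha_i)$ upstairs: the counit $q^*q_*\sF\to\sF$ need not be surjective, and a proper subsheaf can have the same $q_*$ as $\sF$. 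Concretely, generation along the fiber over $x\in X$ would require the sections of $A$ vanishing on the subscheme $f^{-1}(x)$ to generate $A\otimes\sI_{f^{-1}(x)}$ on $\widetilde{C}$ --- a separation property of $A$ relative to $f$ that is not among the hypotheses. So your claim that ``the image of $\Phi$ generates $\sI_\Gamma$ generically'' (and, what you actually need, off a set of large codimension so that the upstairs complex is exact off $\Gamma$) is unsupported. Your second flagged obstacle is also real: since $Rq_*p^*A^{-\ell}$ is concentrated in degree $1$, the derived pushforward of your complex is a hypercohomology object, not termwise a complex of the shape Lemma \ref{lem} wants, and the bookkeeping (including $R^1q_*$ of the augmentation $\sI_\Gamma\otimes q^*\Theta^{r+n-1}$) is left entirely open.

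The paper's proof avoids both problems by never leaving $X$. Pushing \eqref{gamma} tensored with $p^*A$ down to $X$ gives $0\to q_*(p^*A\otimes\sI_\Gamma)\to H^0(\widetilde{C},A)\otimes\sO_X\to f_*A$, and the CGG surjection is used \emph{verbatim}, spliced onto this kernel sequence, to produce a locally free presentation $\bigoplus_i W_i\otimes\alpha_i^{-1}\otimes\Theta^{-1}\stackrel{\varphi}{\to}H^0(\widetilde{C},A)\otimes\sO_X\to\sG\to 0$ of the image $\sG$ of the evaluation map. The Eagon--Northcott complex is then applied to this matrix $\varphi$ on $X$ itself: it surjects onto the Fitting ideal $\sJ={\rm Fitt}_0(\sG)$, which agrees with $\sI_C$ away from ${\rm Sing}(C)$ because $A$ is globally generated and $\sI_C$ is radical, and it is exact away from ${\rm Supp}\,\sG=C$; its terms are direct sums of $\Theta^{\otimes(-h-i)}\otimes\beta$, so after twisting by $\Theta^{\otimes(h+n-1)}$ only ample or degree-zero line bundles appear in the range relevant to Lemma \ref{lem}, and the $M$-regularity of $\sI_C\otimes\Theta^{\otimes(h+n-1)}$ follows. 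If you want to salvage your route, you would need to prove the relative generation statement for $\sI_\Gamma\otimes p^*A$ over $X$ and carry out the spectral-sequence argument; but the intended use of the CGG hypothesis is as a presentation of $\sG$ on the base, not as a generating system on the product.
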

\begin{proof}
 Consider the exact sequence defining the graph $\Gamma$
\begin{equation}\label{gamma}
0\longrightarrow \sI_{\Gamma}\longrightarrow \sO_{\widetilde{C}\times X}\longrightarrow \sO_{\Gamma}\longrightarrow 0.
\end{equation}
% We tensor it by $p^*B$ and then push forward via $q$.
By tensoring \eqref{gamma} by $p^*A$ and then by pushing forward to $X$ via $q$, we obtain the exact sequence
\begin{equation}\label{ex}
0\longrightarrow q_*(p^*A\otimes \sI_{\Gamma})\longrightarrow H^0(\widetilde{C},A)\otimes \sO_X\stackrel{{\rm ev}}{\longrightarrow} f_*A.
\end{equation}
Let $\sG$ be the image of the evaluation map ${\rm ev}$.
% Notice that since $B$ is generated by global sections and $f$ is the normalization of $C$,
% the sheaf $\sG$ is the same as $f_*B$ away from finite may points on $C$.
By hypotheses, there exists
% $q_*(p^*B\otimes \sI_{\Gamma})\otimes L^{\otimes a}$
% is continuously globally generated there exists
a positive integer $N$ and line bundles $\alpha_1,\ldots ,\alpha_N\in {\rm Pic}^0(X)$ such that the map
\begin{equation}\label{cggq}
\bigoplus_{i=1}^N H^0(X,q_*(p^*A\otimes \sI_{\Gamma})\otimes \Theta\otimes \alpha_i)\otimes \alpha_i^{-1}
\longrightarrow q_*(p^*A\otimes \sI_{\Gamma})\otimes \Theta 
\end{equation}
is surjective. We set 
$W_i:=H^0(X,q_*(p^*A\otimes \sI_{\Gamma})\otimes \Theta\otimes \alpha_i)$ for $i=1,\ldots, N$.  From \eqref{ex} and \eqref{cggq},
we deduce 
% an exact sequence
% $$\bigoplus_{i=1}^N W_i\otimes \alpha_i^{-1}
% \longrightarrow H^0(\widetilde{C},A)\otimes \Theta^{\otimes a}\longrightarrow \sG\otimes \Theta^{\otimes a} \longrightarrow 0.$$
% and therefore 
a presentation of $\sG$: 
\begin{equation}\label{succ}
\bigoplus_{i=1}^N W_i\otimes \alpha_i^{-1}
\otimes \Theta^{-1}\stackrel{\varphi}{\longrightarrow} H^0(\widetilde{C},A)\otimes \sO_X\longrightarrow \sG\longrightarrow 0.
\end{equation}
We set $$E:=\bigoplus_{i=1}^N W_i\otimes \alpha_i^{-1}\otimes \Theta ^{-1}\quad \mbox{ and } \quad h:=\ddim H^0(\widetilde{C},A).$$
Let $\sJ$ be the $0$-th Fitting ideal of $\sG$. Since $A$ is globally generated and $\sI_C$ is a radical ideal, 
% Since $\sG$ is locally the same as $\sO_C$ except at the singular points of $C$ and $\sI_C$ is radical,
$\sJ$ coincides with $\sI_C$ away the singular points of $C$ (see \cite{GLP} p.496).
% By the Eagon-Northcott complex associated to $\sG$ we get a complex
% and equal to $\sI_C$ away from singular points of $C$.
By applying the Eagon-Northcott complex (\emph{cf}. \cite{GLP} (0.4)) to the morphism $\varphi$ in \eqref{succ}, we get a complex
$$E^{\bullet}:\cdots\longrightarrow E_1\longrightarrow E_0\longrightarrow \sJ\longrightarrow 0$$
which is exact away from $C$ and whose terms are copies of
$\bigwedge ^{h+i}E$ for all $i\geq 0$. Therefore 
% as a direct sum of line bundles of type
% $\Theta^{\otimes a(-h-i)}\otimes \beta$ where $\beta\in {\rm Pic}^0(X)$, i.e.
$$E_i\cong \bigoplus_{t} \Theta^{\otimes (-h-i)}\otimes \beta_{i_t}\quad  \mbox{ for some }\quad \beta_{i_t}\in \pic{X}.$$
Moreover we have
$$V^j(E_i\otimes \Theta^{\otimes (h+n-1)})\subset V^j(\Theta^{\otimes (n-i+1)})=\emptyset\quad \mbox{ for any }\quad j>0\quad \mbox{ and }
\quad i<n-1$$
% Notice that $V^j(\Theta^{\otimes a(-h-i)+k}\otimes \beta_t)$ is isomorphic to $V^j(\Theta^{\otimes a(-h-i)+k})$.
and $$V^j(E_{n-1}\otimes \Theta^{\otimes (h+n-1)})\subset V^j(\sO_X)=\{\sO_X\}\quad \mbox{ for any }\quad j>0.$$
% $\dim V^j(E_i\otimes \Theta^{\otimes k})\leq \dim V^j(\Theta^{\otimes a(-h-i)+k})$.
% Now we fix $k:=a(h+n-1)$.
% Then $$V^j(\Theta^{\otimes a(-h-i)+k})=\emptyset\quad \mbox{ for }\quad 0\leq i<n-1, \quad i\leq j\leq n$$ and
% $$V^n\big(\Theta^{\otimes a(-h-n+1)+k}\big)=V^n(\sO_X)=\{\sO_X\}.$$
% Therefore $V^j(E_i\otimes \Theta^{\otimes k})=\emptyset$
% for $0\leq i<n-1$, $i\leq j\leq n$ and $\dim V^n(E_{n-1}\otimes \Theta^{\otimes k})=0$.
Thus, by Lemma \ref{lem} we obtain inclusions
% $$V^j(\sJ\otimes \Theta ^{\otimes a(h+n-1)})\subset V^j(E_0\otimes \Theta^{\otimes a(h+n-1)})\cup \ldots \cup V^n(E_{n-j}\otimes
% \Theta^{\otimes a(h+n-1)})$$ and hence in particular
$$V^1(\sJ\otimes \Theta^{\otimes (h+n-1)})\subset V^n(E_{n-1}\otimes \Theta^{\otimes (h+n-1)})\subset \{\sO_X\}$$ and
$$V^j(\sJ\otimes \Theta^{\otimes (h+n-1)})=\emptyset\quad \mbox{ for any }\quad j>1.$$
% Finally by comparing $\sJ$ to $\sI_C$ we have an exact sequence
% $$0\longrightarrow \sJ\longrightarrow \sI_C\longrightarrow \sQ\longrightarrow 0$$
% where the sheaf $\sQ$ is supported at singular points of $C$.
Finally, by noting that 
$$V^j(\sI_C\otimes \Theta^{\otimes (h+n-1)})\subset V^j(\sJ\otimes \Theta^{\otimes (h+n-1)})\quad \mbox{ for any }\quad j>0,$$ we conclude then 
that $\sI_C\otimes \Theta^{\otimes (h+n-1)}$ is an $M$-regular sheaf.
\end{proof}

In the next proposition we
will give sufficient conditions for the hypotheses of Proposition \ref{en} to be satisfied.
%  show for which globally generated line bundles $A$ on $\widetilde{C}$ 
% the sheaves $q_*(\sI_{\Gamma}\otimes p^*A)\otimes\Theta$ are continuously globally generated.
% Let $\nu:\widetilde{C}\longrightarrow C$ be the normalization of $C$. We denote by $\tilde{g}$ the genus of $\widetilde{C}$.
\begin{prop}\label{pencil}
% Let $X$ be a complex abelian variety of dimension $n$ and $L$ an ample line bundle on $X$. Let $i:C\hookrightarrow X$ a smooth curve of genus $g$.
If $B$ is a line bundle on $\widetilde{C}$ such that $f_*B$ is $M$-regular on $X$, then
$q_*(\sI_{\Gamma}\otimes p^*(B\otimes f^*\Theta))\otimes \Theta$ is $M$-regular on $X$.
\end{prop}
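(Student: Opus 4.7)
The plan is to push forward the defining short exact sequence of the graph $\Gamma$ to $X$ via $q$, and to extract the $M$-regularity of $\sM := q_*(\sI_\Gamma \otimes p^*(B \otimes f^*\Theta)) \otimes \Theta$ from the $M$-regularity of $f_*B$ through a cohomology chase.

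First I would tensor the sequence $0 \to \sI_\Gamma \to \sO_{\widetilde{C}\times X} \to \sO_\Gamma \to 0$ by $p^*(B \otimes f^*\Theta)$ and apply $Rq_*$. Flat base change for the projection $q$ identifies $Rq_*\bigl(p^*(B \otimes f^*\Theta)\bigr)$ with $R\Gamma(\widetilde{C}, B \otimes f^*\Theta) \otimes \sO_X$, while the projection formula combined with the finiteness of $f = \iota \circ \nu$ gives $Rq_*\bigl(\sO_\Gamma \otimes p^*(B \otimes f^*\Theta)\bigr) \cong f_*B \otimes \Theta$, concentrated in degree zero. Taking cohomology sheaves produces on $X$ the five-term exact sequence
\begin{equation*}
0 \to \sM_0 \to V \otimes \sO_X \to f_*B \otimes \Theta \to \sH \to W \otimes \sO_X \to 0,
\end{equation*}
with $\sM_0 := q_*(\sI_\Gamma \otimes p^*(B \otimes f^*\Theta))$, $\sH := R^1q_*(\sI_\Gamma \otimes p^*(B \otimes f^*\Theta))$, $V := H^0(\widetilde{C}, B \otimes f^*\Theta)$, and $W := H^1(\widetilde{C}, B \otimes f^*\Theta)$.

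To prove the $M$-regularity of $\sM = \sM_0 \otimes \Theta$, I would tensor the above sequence by $\Theta \otimes \alpha$ with $\alpha \in \pic{X}$ and chase the induced long exact sequences in cohomology. Three vanishings fuel the chase: the ampleness of $\Theta \otimes \alpha$ forces $H^{\geq 1}(X, \Theta \otimes \alpha) = 0$; the 1-dimensional support of the cokernel $\sC := \mathrm{coker}(V \otimes \sO_X \to f_*B \otimes \Theta)$ and of the image $\sE := \mathrm{im}(V \otimes \Theta \otimes \alpha \to f_*B \otimes \Theta^{\otimes 2} \otimes \alpha)$ forces $H^{\geq 2}$ of their twists to vanish; and $f_*B \otimes \Theta^{\otimes 2} \otimes \alpha$ has cohomology computable on $\widetilde{C}$, hence zero above degree~$1$. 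A routine diagram chase then yields $H^i(X, \sM \otimes \alpha) = 0$ for every $i \geq 3$ and every $\alpha$, so that $V^i(\sM) = \emptyset$ for all $i \geq 3$.

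The remaining and hardest task is the verification of the codimension inequalities $\codim V^1(\sM) \geq 2$ and $\codim V^2(\sM) \geq 3$ in $\pic{X}$. Here the hypothesis that $f_*B$ is $M$-regular must intervene. By Pareschi-Popa-type preservation of $M$-regularity under tensoring by an ample and globally generated line bundle, $f_*B \otimes \Theta$ and $f_*B \otimes \Theta^{\otimes 2}$ inherit strong cohomological vanishing; in particular, $V^1(f_*B \otimes \Theta^{\otimes 2})$ has codimension at least two in $\pic{X}$. Combined with an analysis of the multiplication-of-sections map $V \otimes H^0(X, \Theta \otimes \alpha) \to H^0(\widetilde{C}, B \otimes f^*\Theta^{\otimes 2} \otimes f^*\alpha)$, which governs $H^1(\sM \otimes \alpha)$, this should yield the required estimates. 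The most delicate point, and the main obstacle I anticipate, is to pin down the codimension count precisely—via Serre duality and Brill-Noether-type arguments on $\widetilde{C}$—so as to translate the $M$-regularity of $f_*B$ into the required bounds on $V^i(\sM)$.
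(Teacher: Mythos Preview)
Your setup is on the right track but two genuine gaps remain, and both stem from underusing the hypothesis that $f_*B$ is $M$-regular.

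First, the preservation result you invoke is stronger than you state: an $M$-regular sheaf tensored by an ample line bundle is not merely $M$-regular but I.T.$0$ (Pareschi--Popa), so in fact $V^i(f_*B\otimes\Theta^{\otimes m})=\emptyset$ for all $i,m\geq 1$, and $f_*B\otimes\Theta$ is globally generated. The global generation makes the evaluation map $V\otimes\sO_X\to f_*B\otimes\Theta$ surjective, collapsing your five-term sequence to the short exact sequence
\[
0\longrightarrow \sM_0\longrightarrow H^0(\widetilde{C},B\otimes f^*\Theta)\otimes\sO_X\longrightarrow f_*B\otimes\Theta\longrightarrow 0.
\]
Without this you carry a cokernel $\sC$ supported on $C$ whose $H^0$ need not vanish for any $\alpha$, and your chase for $V^2(\sM)$ stalls. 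Even granting the short exact sequence, your stated bound ``$\codim V^1(f_*B\otimes\Theta^{\otimes 2})\geq 2$'' only yields $\codim V^2(\sM)\geq 2$, which is \emph{not} the required $>2$. You need the I.T.$0$ statement to get $V^2(\sM)=\emptyset$ outright, as the paper does.

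Second, and more seriously, you have no mechanism for bounding $V^1(\sM)$. You correctly identify $V^1(\sM)$ with the non-surjectivity locus of the multiplication map
\[
m_\alpha:H^0(X,f_*B\otimes\Theta)\otimes H^0(X,\Theta\otimes\alpha)\longrightarrow H^0(X,f_*B\otimes\Theta^{\otimes 2}\otimes\alpha),
\]
but ``Serre duality and Brill--Noether-type arguments on $\widetilde{C}$'' will not connect this locus back to $V^1(f_*B)$. The paper's key step is to reinterpret the failure of $m_\alpha$ via the syzygy bundle $M_{\Theta\otimes\alpha}$: one shows that $\alpha\in V^1(\sM)$ forces $H^1\big(C,\iota^*(M_{\Theta\otimes\alpha}\otimes\Theta)\otimes\nu_*B\big)\neq 0$, and then, choosing two general sections of $\Theta\otimes\alpha$ that generate along $C$, a Snake Lemma argument produces an injection $\iota^*(\Theta^{-1}\otimes\alpha^{-1})\hookrightarrow\iota^*M_{\Theta\otimes\alpha}$ with trivial cokernel. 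This forces $H^1(X,f_*B\otimes\alpha^{-1})\neq 0$, i.e.\ $\alpha^{-1}\in V^1(f_*B)$, giving $\dim V^1(\sM)\leq\dim V^1(f_*B)\leq n-2$. This syzygy-bundle trick is the heart of the proof and is absent from your outline.
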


\begin{proof} 
The sheaf $f_*B\otimes \Theta^{\otimes m}$ is globally generated for any $m\geq 1$ by \cite{PP1} Proposition 2.12. 
Moreover, by \cite{PP2} Proposition 3.1, $f_*B\otimes \Theta^{\otimes m}$ is
an $I.T.0$ sheaf for any $m\geq 1$, i.e.  
$V^i(f_*B\otimes \Theta^{\otimes m})=\emptyset$ for all $i,m\geq 1$.

By \eqref{ex} we obtain exact sequences for any $\alpha\in \pic{X}$
\begin{equation}\label{exgamma3}
 0\longrightarrow q_*(\sI_{\Gamma}\otimes p^*(B\otimes f^*\Theta))\otimes \Theta\otimes \alpha\longrightarrow
H^0(\widetilde{C},B\otimes f^*\Theta)\otimes
\Theta \otimes \alpha
\longrightarrow f_*B\otimes \Theta^{\otimes 2}\otimes \alpha\longrightarrow 0.
\end{equation}
We set $\sH:=q_*(\sI_{\Gamma}\otimes p^*(B\otimes f^*\Theta))$ so that we only need to check the conditions
${\rm codim}_{\pic{X}}V^i(\sH\otimes \Theta)>i$ for all $i>0$.
By the Kodaira Vanishing Theorem and by \eqref{exgamma3}, we have
$$V^i(\sH\otimes \Theta)=\emptyset \quad \mbox{ for any }\quad i\geq 3.$$ Furthermore
$$V^2(\sH\otimes \Theta)\cong V^1(f_*B\otimes \Theta^{\otimes 2})=\emptyset.$$
Now we study the dimension of $V^1(\sH\otimes \Theta)$.
By denoting by
\begin{equation}\label{mult}
m_{\alpha}: H^0(X,f_*B\otimes \Theta)\otimes H^0(X,\Theta\otimes \alpha)\longrightarrow
H^0(X,f_*B\otimes \Theta^{\otimes 2}\otimes \alpha)
\end{equation}
the multiplication map on global sections induced by \eqref{exgamma3}, we have an identification
\begin{eqnarray}\label{V1}
V^1(\sH\otimes \Theta)=\{\alpha\in{\rm Pic}^0(X)\,|\,m_{\alpha}\mbox{ is not surjective}\}.
\end{eqnarray}
We claim that the inverse morphism $(-1):{\rm Pic}^0(X)\longrightarrow {\rm Pic}^0(X)$ taking $\alpha$ to $\alpha^{-1}$
maps
\begin{equation}\label{inclusion}
 V^1(\sH\otimes \Theta)\mapsto V^1(f_*B).
\end{equation}
This finishes the proof since it implies
$$\ddim V^1(\sH\otimes \Theta)\leq\ddim V^1(f_*B)\leq n-2.$$ 

Now we show \eqref{inclusion}.
Since $\Theta\otimes \alpha$ is globally generated for any $\alpha\in \pic{X}$, we have exact sequences
$$0\longrightarrow M_{\Theta\otimes \alpha}\longrightarrow H^0(X,\Theta\otimes \alpha)\otimes \sO_X\longrightarrow
\Theta\otimes \alpha\longrightarrow 0.$$
Tensoring by $\Theta$ and then restricting to $C$ and finally tensoring by $\nu_*B$, we get exact sequences
$$0\longrightarrow \iota^*(M_{\Theta\otimes \alpha}\otimes \Theta)\otimes \nu_*B
\longrightarrow H^0(X,\Theta\otimes \alpha)\otimes \iota^*\Theta\otimes \nu_*B
\stackrel{{\rm ev}_{\alpha}}{\longrightarrow} \iota^*(\Theta^{\otimes 2}\otimes \alpha)\otimes \nu_*B\longrightarrow 0.$$
We note that the map on global sections induced by ${\rm ev}_{\alpha}$ coincides with the multiplication map \eqref{mult}. Hence
by \eqref{V1}
\begin{equation}\label{incl2}
\alpha\in V^1(\sH\otimes \Theta)\quad \Longrightarrow \quad
H^1(C,\iota^*(M_{\Theta\otimes \alpha}\otimes \Theta)\otimes \nu_*B)\neq 0.
\end{equation}
Now pick an arbitrary element $\alpha \in V^1(\sH\otimes \Theta)$ and
set $W:={\rm Im}\big(H^0(X,\Theta\otimes \alpha)\longrightarrow H^0(C,\iota^*(\Theta\otimes \alpha))\big)$. We note that
$W$ generates $\iota^*(\Theta\otimes \alpha)$ and hence $\ddim W\geq 2$ since $\Theta$ is not trivial.
Moreover, the preimages $s_1$ and $s_2$ in
$H^0(X,\Theta\otimes \alpha)$ of two general sections in $W$ generate $\iota^*(\Theta\otimes \alpha)$.
We have then a commutative diagram

\centerline{ \xymatrix@=32pt{
 & 0  \ar[r] & \iota^*(\Theta^{-1}\otimes \alpha^{-1})\ar[d]\ar[r] & \sO_{C}\oplus
\sO_{C}\ar[d]\ar[r]^{(s_1,s_2)} & \iota^*(\Theta
\otimes \alpha) \ar @{=}[d]\ar[r] & 0\\
& 0 \ar[r] & \iota^*M_{\Theta\otimes \alpha}  \ar[r]  & H^0(X,\Theta\otimes \alpha)\otimes \sO_{C}
\ar[r] & \iota^*(\Theta\otimes
\alpha) \ar[r] & 0.\\ }}
\noindent
By defining $\widetilde{V}:=H^0(X,\Theta\otimes \alpha)/ \langle s_1,s_2\rangle$
and by the Snake Lemma, we obtain the exact sequence
$$0\longrightarrow \iota^*(\Theta^{-1}\otimes \alpha^{-1})\longrightarrow \iota^*M_{\Theta\otimes \alpha}
\longrightarrow \widetilde{V}\otimes \sO_C\longrightarrow 0$$ and hence the exact sequence
$$0\longrightarrow  \iota^*\alpha^{-1}\otimes \nu_*B
\longrightarrow \iota^*(M_{\Theta\otimes \alpha}\otimes \Theta)\otimes \nu_*B
\longrightarrow \widetilde{V}\otimes \iota^*\Theta\otimes \nu_*B\longrightarrow 0.$$
Finally, the projection formula yields isomorphisms 
$$H^1(C,\nu_*B\otimes \iota^*\alpha^{-1})\cong H^1(X,f_*B\otimes \alpha^{-1})\quad \mbox{ and }
\quad H^1(C,\nu_*B\otimes \iota^*\Theta)\cong H^1(X,f_*B\otimes \Theta)=0$$ which in turn show that 
$\alpha^{-1}\in V^1(f_*B)$ by \eqref{incl2}. 
\end{proof}
Before proceeding with the proof of Theorem \ref{intr-thm}, we prove a lemma giving a necessary condition for a sheaf of the form $f_*B$ to be 
$M$-regular on $X$.

\begin{lemma}\label{intersection}
 Let $D$ be a smooth and irreducible curve of genus $g$ and $\varphi:D\longrightarrow X$ be a morphism to
a complex abelian variety $X$ of dimension $n$.
If $B$ is a general line bundle on $D$ of degree $b$,
then 
% there exists an open dense subset $V\subset {\rm Pic}^0(D)$ such that
$$\ddim V^1(\varphi_*B)\leq n+g-b-2.$$
\end{lemma}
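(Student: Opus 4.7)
The plan is to translate the condition $\alpha\in V^1(\varphi_* B)$ into an effectivity condition on $D$ and then bound the resulting locus by a standard incidence-variety dimension count. The projection formula gives $H^1(X,\varphi_* B\otimes \alpha)\cong H^1(D,B\otimes \varphi^*\alpha)$, and Serre duality on $D$ identifies this with $H^0(D,\omega_D\otimes B^{-1}\otimes \varphi^*\alpha^{-1})^{\vee}$. Hence
$$V^1(\varphi_* B)=\big\{\alpha\in \pic{X}\,\big|\, \omega_D\otimes B^{-1}\otimes \varphi^*\alpha^{-1}\text{ is effective}\big\}.$$
Setting $k:=2g-2-b$, I may assume $k\geq 0$, since otherwise $V^1(\varphi_* B)=\emptyset$ and the bound holds vacuously.

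I would then introduce the incidence scheme
$$Z:=\big\{(E,\alpha,B)\in D^{(k)}\times \pic{X}\times {\rm Pic}^b(D)\,\big|\, B\cong \omega_D\otimes \varphi^*\alpha^{-1}\otimes \sO_D(-E)\big\}.$$
Because $B$ is uniquely determined by the pair $(E,\alpha)$, the projection onto the first two factors realizes $Z$ as the graph of a morphism $D^{(k)}\times \pic{X}\to {\rm Pic}^b(D)$; thus $Z$ is irreducible of dimension $k+n$. The third projection $\pi\colon Z\to {\rm Pic}^b(D)$ is either not dominant---in which case for a general $B$ the fiber is empty and $V^1(\varphi_* B)=\emptyset$---or it is surjective onto the $g$-dimensional variety ${\rm Pic}^b(D)$, and upper semi-continuity of fiber dimension yields $\ddim Z_B=k+n-g=n+g-b-2$ for a general $B$.

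To finish, I would observe that for any $B$ the second projection $Z_B\to \pic{X}$ has image exactly $V^1(\varphi_* B)$: every $\alpha\in V^1(\varphi_* B)$ lifts to a point $(E,\alpha,B)\in Z_B$ by taking $E$ to be the effective divisor of a nonzero section of $\omega_D\otimes B^{-1}\otimes \varphi^*\alpha^{-1}$. Therefore $\ddim V^1(\varphi_* B)\leq \ddim Z_B\leq n+g-b-2$ for a general $B\in {\rm Pic}^b(D)$. The only step requiring attention is the generic fiber-dimension estimate for $\pi$, but once the identification $Z\cong D^{(k)}\times \pic{X}$ is in place this is routine upper semi-continuity, so I do not expect a serious technical obstacle.
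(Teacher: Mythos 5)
Your proof is correct, but it takes a genuinely different route from the paper's. The paper also starts from Serre duality, identifying $V^1(B)$ with $W_{2g-2-b}(D)\subset\pic{D}$, but it then exploits the genericity of $B$ via Kleiman's transversality theorem: translating $V^1(B)$ by a general $\gamma\in\pic{D}$ and forming the fiber product with $\varphi^*:\pic{X}\to\pic{D}$, it gets $\ddim\big(\gamma V^1(B)\times_{\pic{D}}\pic{X}\big)=\ddim V^1(B)+n-g$ for general $\gamma$, and the projection formula supplies a closed immersion $V^1(\varphi_*(B\otimes\gamma^{-1}))\hookrightarrow \gamma V^1(B)\times_{\pic{D}}\pic{X}$. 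You replace the appeal to Kleiman by the incidence correspondence $Z\cong D^{(k)}\times\pic{X}$ fibered over ${\rm Pic}^b(D)$ together with the generic fiber-dimension theorem; this is more elementary and self-contained (no transversality theorem, hence also characteristic-free), whereas the paper's version is shorter on the page because Kleiman's theorem absorbs the dimension count. The two arguments are ultimately the same computation: moving $B$ generically in the $g$-dimensional ${\rm Pic}^b(D)$ forces the $n$-dimensional image of $\varphi^*$ to meet the $(2g-2-b)$-dimensional locus $V^1(B)$ in dimension at most $n+g-b-2$. One small point to tidy: the isomorphism $H^1(X,\varphi_*B\otimes\alpha)\cong H^1(D,B\otimes\varphi^*\alpha)$ requires $R^1\varphi_*B=0$, i.e.\ that $\varphi$ is finite; the constant case is trivial (the paper disposes of it first), and in any event the Leray spectral sequence always gives the inclusion $V^1(\varphi_*B)\subseteq(\varphi^*)^{-1}V^1(B)$, which is all your upper bound actually uses.
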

\begin{proof}
Without loss of generality, we can assume that $\varphi$ in non-constant since in this case 
$V^1(\varphi_*B)=\emptyset$ for all $B\in \pic{D}$.
The algebraic set $V^1(B)$ is irreducible as Serre duality yields an isomorphism $V^1(B)\cong W_{2g-2-b}(D)$ 
(here $W_{2g-2-b}(D)$ is the image of the Abel-Jacobi map
${\rm Sym}^{2g-2-b}(D)\longrightarrow {\rm Pic}^{2g-2-b}(D)$). 
The algebraic group $\pic{D}$ acts on itself via translations. For any $\gamma \in \pic{D}$, we write $\gamma V^1(B)$ for 
the image of $V^1(B)$ under the action of $\gamma$. 
Then, by Kleiman's Transversality Theorem \cite{Kl} Theorem 2, there exists an open dense subset $V\subset \pic{D}$ such that for all $\gamma \in V$
the fiber product 
$\gamma V^1(B)\times_{\pic{D}}\pic{X}$ is either empty or of dimension $$\ddim V^1(B)+\dim \pic{X}-\dim \pic{D}=\ddim V^1(B)+n-g.$$
We note the isomorphisms of algebraic sets $\gamma V^1(B)\cong V^1(B\otimes \gamma^{-1})$ for any $\gamma \in V$.
Moreover, by the universal property of the fiber product and by the projection formula, we obtain closed 
immersions $V^1(\varphi_*(B\otimes\gamma^{-1}))\hookrightarrow V^1(B\otimes \gamma^{-1})\times_{\pic{D}}\pic{X}$.
Hence for any $\gamma\in V$ we have
\begin{eqnarray*}
\dim V^1(\varphi_*(B\otimes \gamma^{-1})) & \leq & \ddim \big(\gamma V^1(B)\times_{\pic{D}}\pic{X}\big)\\
& = & \ddim V^1(B)+n-g\\
& = & \ddim W_{2g-2-b}(D)+n-g\\
& \leq & n+g-b-2.
\end{eqnarray*}
\end{proof}

At this point the proof of Theorem \ref{intr-thm} easily follows by the previous lemmas and propositions.
\begin{proof}[Proof of Theorem \ref{intr-thm}.]
By Lemma \ref{intersection} we can pick a line bundle $B$ of degree $g$ on $\widetilde C$ such that $f_*B$ is $M$-regular on $X$ and 
$h^1(\widetilde{C},B\otimes f^*\Theta)=0$.
Hence, the sheaf $q_*(\sI_{\Gamma}\otimes p^*(B\otimes f^*\Theta))\otimes \Theta$ is
continuously globally generated by Proposition \ref{pencil} and
we conclude then by applying Proposition \ref{en} after having noted that $h^0(\widetilde{C},B\otimes f^*\Theta)=
d+1$. 
\end{proof}

\noindent \textbf{Acknowledgements.}
We thank Lawrence Ein, Angela Ortega, Giuseppe Pareschi and Mihnea Popa for stimulating and useful conversations. 
We also thank the Mathematics Research Communities (a program of the AMS) for having supported a visit of the second author by the first.

\addcontentsline{toc}{chapter}{References}

\end{document}